\theoremstyle{plain}
\newtheorem{theorem}{Theorem}
\newtheorem{lemma}[theorem]{Lemma}
\newtheorem{claim}{Claim}
\theoremstyle{remark}
\newtheorem*{Remark}{Remark}
\newcommand\supp{\operatorname{supp}}
\newcommand{\norm}[1]{\left\lVert#1\right\rVert}
\begin{document}

\title[]{On the existence of an extremal function in the Delsarte extremal problem}

\author{Marcell Ga\'al and Zsuzsanna Nagy-Csiha}


\address{Marcell Ga\'al
\newline  \indent R\'enyi Institute of Mathematics \newline \indent Hungarian Academy of Sciences,
\newline \indent Budapest, Re\'altanoda utca 13-15,  1053 Hungary}
\email{gaal.marcell@renyi.hu}

\address{Zsuzsanna Nagy-Csiha
\newline \indent Department of Numerical Analysis 
\newline \indent Faculty of Informatics, Eötvös Loránd University,
\newline \indent Budapest, Pázmány Péter sétány 1/C, 1117 Hungary
\newline \indent and
\newline \indent Institute of Mathematics and Informatics 
\newline \indent Faculty of Sciences, University of Pécs,
\newline \indent Pécs, Ifjúság útja 6, 7624 Hungary}
\email{ncszsu@gamma.ttk.pte.hu}

\keywords{LCA groups, Fourier transform, positive definite functions, Delsarte's extremal problem.}
\subjclass[2010]{Primary: 43A35, 43A40. Secondary: 43A25, 43A70.}

\maketitle
\begin{abstract}
This paper is concerned with a Delsarte type extremal problem.
Denote by $\mathcal{P}(G)$ the set of positive definite continuous functions on a locally compact abelian group $G$. We consider the function class, which was originally introduced by Gorbachev,
\begin{multline*}
\mathcal{G}(W, Q)_G =
\left\{ f \in \mathcal{P}(G) \cap L^1(G) ~ :
\right.
\\ ~
\left. f(0) = 1, ~ \supp{f_+} \subseteq W,~  \supp \widehat{f} \subseteq Q \right\}
\end{multline*}
where $W\subseteq G$ is closed and of finite Haar measure and $Q\subseteq \widehat{G}$ is compact. We also consider the related Delsarte type problem of finding the extremal quantity
\begin{equation*}
\mathcal{D}(W,Q)_G = \sup \left\{ \int_{G} f(g) d\lambda_G(g) ~ : ~ f \in \mathcal{G}(W,Q)_G\right\}.
\end{equation*}
The main objective of the current paper is to prove the existence of an extremal function for the Delsarte type extremal problem $\mathcal{D}(W,Q)_G$. 
The existence of the extremal function has recently been established by Berdysheva and Révész in the most immediate case where $G=\mathbb{R}^d$.
So the novelty here is that we consider the problem in the general setting of locally compact abelian groups.
In this way our result provides a far reaching generalization of the former work of Berdysheva and Révész.

\end{abstract}

\bigskip
\bigskip

\section{Introduction}

The Fourier analytic formulation of the so-called Delsarte extremal problem on $\mathbb{R}^d$ incorporates the calculation of the numerical quantity
$$ \sup \widehat{f}(0) = \sup \frac{1}{(2\pi)^{\frac{d}{2}}} \int_{\mathbb{R}^{d}} f(x)dx $$ provided that
(i) $f\in L^1(\mathbb{R}^d)$, $f$ is continuous and bounded on $\mathbb{R}^d$, 
(ii) $f(0)=1$,
(iii) $f(x) \leq 0$ for $\|x\| \geq 2$ and
(iv) $\widehat{f}(y) \geq 0$.
The last property (iv) can be interpreted as $f$ being positive definite, see the precise definition of positive definiteness in the forthcoming section.

The Delsarte extremal problem has generated broad interest because of its intimate connections to different problems from various branches of mathematics.
First of all, the linear programming bound of Delsarte is useful in coding and design theory as well. Second, let us mention that relying on Delsarte's problem, upper bounds can be derived for the sphere packing density of $\mathbb{R}^d$ \cite{Cohn,Gorbachev,Levenshtein,Yudin}.
Moreover, Gorbachev and Tikhonov \cite{GT} worked out a further concrete application of the Delsarte problem for the so-called Wiener problem.

A few of years ago Viazovska \cite{Viazovska} solved the sphere packing problem in dimension 8, combining the Delsarte extremal problem with modular form techniques. Subsequently, in the paper \cite{CKMRV} Cohn et al. resolved the problem also in dimension 24.

Beside solving the Delsarte problem, further challenging and closely related questions come into picture. As for recent investigations in this direction, we refer to the seminal paper of Berdysheva and R\'ev\'esz \cite{Berdysheva}.
They have pointed out the independence of the extremal constant from the underlying function class. Furthermore, they showed the existence of an extremal function in band-limited cases.
The main objective of the current paper is to prove an analogous result for general LCA groups.

\section{The result}

Before moving on, we need some more preliminaries. In the first part of the section, we summarize the necessary background from the field of abstract harmonic analysis.
Let $G$ be a locally compact abelian group (LCA group for short).
The \emph{dual group} of $G$ is denoted by $\widehat{G}$, by which we mean the set of continuous homomorphisms of $G$ into the complex unit circle $\mathbb{T}$, the multiplication being the pointwise multiplication of functions.
For a compact set $K\subseteq G$ and an open set $U\subseteq \mathbb{T}$, consider the set
\[
P(K,U):=\{ \chi \in \widehat{G} ~ : ~ \chi(K) \subseteq U  \}.
\]
Then the \emph{compact open topology} on $\widehat{G}$ contains the sets $P(K,U)$ as a subbasis. By this topology, $\widehat{G}$ acquires an LCA group structure.
The \emph{Pontryagin-van Kampen Duality Theorem} asserts that $G$ is isomorphic to $\widehat{\widehat{G}}$, both as groups and as topological spaces.
In this case $\delta$ stands for the corresponding natural isomorphism, that is
\[
\delta_g(\chi):=\chi(g), \qquad \chi  \in \widehat{G}
\]
and
$\delta: G \to \widehat{\widehat{G}}$, $g\mapsto \delta_g$
which is usually called the \emph{Pontryagin map}.

Recall that a \emph{continuous} function $f \in C(G)$ is called \emph{positive definite} (denoted by $f \gg 0$) if the inequality
\begin{equation}\label{posdef} 
\sum_{j=1}^{n}\sum_{k=1}^{n} c_j \overline{c_k} f(g_j-g_k)\ge 0
\end{equation}
holds for all choices of $n\in \mathbb{N}$, $c_j\in \mathbb{C}$ and $g_j\in G$
for $j=1,\dots,n$.
Throughout the paper the set of continuous positive definite functions defined on $G$ will be denoted by $\mathcal{P}(G)$.
If $\lambda_G$ is a (fixed, conveniently normalized) Haar measure on $G$, then the condition
\eqref{posdef} for continuous $f$ is equivalent to 
    \[
    \int_G\int_G f(g-s) \varphi(g)\overline{\varphi(s)} d\lambda_G(g) d\lambda_G(s) \geq 0
    \]
    for every $\varphi \in L^1(G)$ (see, for instance \cite[13.4.4. Proposition]{Dixmier}).
The next properties will be quite useful in the sequel.
We have \cite[(32.4)]{HewittRossII}

\begin{lemma} \label{L:positivedefprop}
Let $G$ be an LCA group and denote by $\star$ the convolution.
\begin{enumerate}
    \item If $f$ is a positive definite function on $G$, then
        \begin{itemize}
            \item[a)] $\left|f(g)\right| \leq f(0)=\|f\|_{\infty}$ for all $g\in G$;
            \item[b)] $\int_G f d\lambda_G\geq 0$.
        \end{itemize}
    \item If $\varphi\in L^2(G)$ and $\widetilde{\varphi}$ is defined as $\widetilde{\varphi}(g):=\overline{\varphi(-g)}$ $(g\in G)$, then the convolution square $\varphi\star\widetilde{\varphi}$ is a continuous positive definite function.
\end{enumerate}
\end{lemma}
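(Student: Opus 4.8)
The plan is to prove the three assertions separately; each rests on one short computation, and I expect the only point needing genuine care to be the limiting argument in part (1b). For part (1a) I would specialise the defining inequality \eqref{posdef}: with $n=1$, $c_1=1$, $g_1=0$ it gives $f(0)\ge 0$, while with $n=2$ and nodes $g_1=0$, $g_2=g$ it says that the $2\times 2$ matrix $\bigl(\begin{smallmatrix} f(0)&f(-g)\\ f(g)&f(0)\end{smallmatrix}\bigr)$ is Hermitian and positive semidefinite. Hence $f(-g)=\overline{f(g)}$ and $0\le f(0)^2-|f(g)|^2$, that is $|f(g)|\le f(0)$; together with the trivial bound $f(0)\le\|f\|_\infty$ this yields $\|f\|_\infty=f(0)$.

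For part (1b) I would use the integral form of positive definiteness recalled just above the lemma, applied with $\varphi=\mathbf 1_K$ for a compact set $K\subseteq G$ (so that $\mathbf 1_K\in L^1(G)$). After the translation $g\mapsto g+s$ and an application of Fubini's theorem this reads $\int_G f(t)\,c_K(t)\,d\lambda_G(t)\ge 0$, where $c_K(t):=\int_G\mathbf 1_K(t+s)\,\mathbf 1_K(s)\,d\lambda_G(s)=\lambda_G\bigl((K-t)\cap K\bigr)$ satisfies $0\le c_K\le\lambda_G(K)$. Letting $K$ run through a Følner net of compact sets (which exists because every LCA group is amenable), one has $c_K(t)/\lambda_G(K)\to 1$ for each fixed $t\in G$; the integrands $f\,c_K/\lambda_G(K)$ are then dominated by $|f|\in L^1(G)$ and converge pointwise to $f$, so dominated convergence gives $\int_G f\,d\lambda_G=\lim_K\lambda_G(K)^{-1}\int_G f\,c_K\,d\lambda_G\ge 0$. (When $G$ is compact one may simply take $K=G$; alternatively, (1b) can be deduced from Bochner's theorem, which represents $f$ as the Fourier transform of a nonnegative finite measure on $\widehat G$.)

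For part (2), since $\varphi$ and $\widetilde\varphi$ both lie in $L^2(G)$, the convolution is defined pointwise by the Cauchy--Schwarz inequality,
\[
(\varphi\star\widetilde\varphi)(x)=\int_G\varphi(y)\,\overline{\varphi(y-x)}\,d\lambda_G(y),
\]
and coincides with the $L^2$-inner product of $\varphi$ with its translate by $x$; its continuity in $x$ is then immediate from the fact that translation is norm-continuous on $L^2(G)$. For positive definiteness I would change variables to rewrite $(\varphi\star\widetilde\varphi)(g_j-g_k)=\int_G\varphi(y+g_j)\,\overline{\varphi(y+g_k)}\,d\lambda_G(y)$, which turns the defining sum into an honest square:
\[
\sum_{j=1}^{n}\sum_{k=1}^{n} c_j\,\overline{c_k}\,(\varphi\star\widetilde\varphi)(g_j-g_k)=\int_G\Bigl|\,\sum_{j=1}^{n} c_j\,\varphi(y+g_j)\Bigr|^2\, d\lambda_G(y)\ge 0 .
\]
As indicated, the one ingredient that is not purely mechanical is the passage to the limit in (1b) — it is there, rather than in the algebra, that the hypotheses $f\in L^1(G)$ and the amenability of $G$ are genuinely used; everything else is bookkeeping, which is presumably why the three facts are merely quoted from \cite{HewittRossII} in the paper.
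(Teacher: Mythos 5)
Your proof is essentially correct, but note that the paper does not prove this lemma at all: it is quoted verbatim from Hewitt--Ross \cite[(32.4)]{HewittRossII}, so any comparison is with the standard textbook argument rather than with something in the text. Your parts (1a) and (2) coincide with that standard argument (the $2\times2$ positive semidefinite matrix, and the rewriting of the quadratic form as $\int_G\bigl|\sum_j c_j\varphi(y+g_j)\bigr|^2\,d\lambda_G(y)$ together with norm-continuity of translation on $L^2$), and both are complete. For (1b) you should first make explicit the standing hypothesis $f\in L^1(G)$, without which the statement is vacuous; the paper only ever applies (1b) to integrable functions. Your F{\o}lner-net argument is sound in substance, but the final limit is taken along a \emph{net}, and the dominated convergence theorem does not hold for nets in general; you need to upgrade the pointwise convergence $c_K(t)/\lambda_G(K)\to 1$ to uniform convergence on compacta (which the F{\o}lner condition $\lambda_G\bigl((K+C)\setminus K\bigr)<\varepsilon\lambda_G(K)$ for compact $C$ does give) and combine it with the tightness of $|f|\,d\lambda_G$, splitting $\int_G=\int_E+\int_{G\setminus E}$ for a compact $E$ carrying all but $\varepsilon$ of the mass of $|f|$. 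Alternatively, and more in the spirit of the tools the paper actually sets up, (1b) for $f\in\mathcal{P}(G)\cap L^1(G)$ is immediate from $\widehat{f}\geq 0$ and $\int_G f\,d\lambda_G=\widehat{f}(\mathbf{1})$, the route you mention parenthetically; that version avoids amenability altogether and is all the paper ever uses.
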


For any $f\in L^1(G)$, its \emph{Fourier transform} $\widehat{f}$ is defined on $\widehat{G}$ as
\[
\widehat{f}(\chi)=\int_G f(g) \overline{\chi(g)} d\lambda_G(g), \quad \chi \in \widehat{G}.
\]
The \emph{Inversion Theorem} (cf. \cite[Section 1.5]{Rudin}) asserts that if $f$ belongs to $[\mathcal{P}(G) \cap L^1(G)]$, the subspace generated by $\mathcal{P}(G) \cap L^1(G)$, then $\widehat{f}\in L^1\left(\widehat{G}\right)$ and the Haar measure $\lambda_{\widehat{G}}$ on $\widehat{G}$ can be normalized so that $f(g)=\widehat{\widehat{f}}\left(\delta_{g^{-1}}\right)$.
We shall use this Haar measure (the so-called Plancherel measure) on the dual group $\widehat{G}$.
For $k\in L^1\left(\widehat{G}\right)$, introducing the \emph{conjugate Fourier transform} $\mathcal{F}^*$ as
\[
\mathcal{F}^*(k)(g):= \int_{\widehat{G}} k(\chi) \delta_g(\chi) d\lambda_{\widehat{G}}(\chi), \quad g\in G,
\]
the Inversion Theorem can be rephrased as $f=\mathcal{F}^*\left(\widehat{f}\right)$ is satisfied for every $f \in [\mathcal{P}(G) \cap L^1(G)]$.


Another important tool in our study is the \emph{Plancherel Theorem} which asserts that the Fourier transform $\mathcal{F}: [\mathcal{P}(G) \cap L^1(G)] \to L^2\left(\widehat{G}\right)$ can be extended to a unitary equivalence $U:L^2(G) \to L^2\left(\widehat{G}\right)$. This unitary operator is called the \emph{Plancherel transform}. We abuse notation and do not distinguish the usual Fourier transform and the latter extension.

Denote, as usual, $x_{+}:=\max(x,0)$ and $x_{-}:=\max(-x,0)$ for any $x\in \mathbb{R}$, with similar notation for functions as well. In this paper we consider the function class
\begin{multline}\label{G(W,Q)}
\mathcal{G}(W, Q)_G =
\left\{ f \in \mathcal{P}(G) \cap L^1(G) ~ :
\right.
\\ ~
\left. f(0) = 1, ~ \supp{f_+} \subseteq W,~  \supp \widehat{f} \subseteq Q \right\}
\end{multline}
where $W\subseteq G$ is closed and of finite Haar measure and $Q\subseteq \widehat{G}$ is compact.   
It was originally introduced by Gorbachev \cite{Gorbachev} in connection with the Delsarte type problem of finding the extremal quantity
\begin{equation} \label{D(W,Q)}
\mathcal{D}(W,Q)_G = \sup \left\{ \int_{G} f(g) d\lambda_G(g) ~ : ~ f \in \mathcal{G}(W,Q)_G\right\}
\end{equation}
in the most immediate case where $G=\mathbb{R}^d$, $$W=\mathbb{B}=\{x\in \mathbb{R}^d ~:~ \|x\| \leq 1 \}$$ and  $Q=r\mathbb{B}$ with some real number $r>0$.

In the very recent publication
\cite{Berdysheva}, Berdysheva and Révész
analysed in details the aforementioned Delsarte type extremal quantity.
When $G={\mathbb{R}}^d$, they collect
and work up extensive information, which were in part either folklore or just available in different unpublished sources, to clarify
existence of extremal functions
$f\in \mathcal{G}(W, Q)_{\mathbb{R}^d}$ in certain band-limited cases, that is, when $W$ is closed and of finite Lebesgue measure and $Q$ is compact.
Since the problem of existence of the extremal function makes sense also in case of general LCA groups, our objective is to obtain a completely analogous counterpart of the aforementioned result in the general setting of LCA groups. More precisely, we intend to prove the following.

\begin{theorem} \label{T:main}
Let $G$ be any LCA group. If $W \subseteq G$ is closed with positive, finite Haar measure and $Q \subseteq \widehat{G}$ is compact, then there exists an extremal function $f\in \mathcal{G}(W,Q)_G$ satisfying $\int_{G} f d\lambda_G = \mathcal{D}(W,Q)_G$.
\end{theorem}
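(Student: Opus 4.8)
The plan is to apply the direct method of the calculus of variations. Assuming (as we may) that $\mathcal{G}(W,Q)_G\neq\emptyset$, I would first record the uniform estimates that make the class ``essentially compact''. If $f\in\mathcal{G}(W,Q)_G$, then $f\le f(0)=1$ by Lemma~\ref{L:positivedefprop} (the bound $|f|\le f(0)$), hence $0\le f_+\le\mathbb{1}_W$; since also $\int_G f\,d\lambda_G\ge 0$ by the same lemma, $\int_G f_-\,d\lambda_G\le\int_G f_+\,d\lambda_G\le\lambda_G(W)$, so $\|f\|_1\le 2\lambda_G(W)$ and $\|f\|_2^2\le\|f\|_\infty\|f\|_1\le 2\lambda_G(W)$, all uniformly in $f$. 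In particular $\mathcal{D}(W,Q)_G\le\lambda_G(W)<\infty$, and I fix a maximizing sequence $(f_n)\subseteq\mathcal{G}(W,Q)_G$ with $\int_G f_n\,d\lambda_G\to\mathcal{D}(W,Q)_G$.

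The core of the argument is a compactness step carried out on the Fourier side. By the Plancherel Theorem the sequence $(\widehat{f_n})$ is bounded in the Hilbert space $L^2(\widehat G)$ and is supported in the fixed compact (hence finite measure) set $Q$. Since bounded sequences in a Hilbert space are weakly sequentially compact, after passing to a subsequence I have $\widehat{f_n}\rightharpoonup k$ weakly in $L^2(\widehat G)$; the subspace of functions in $L^2(\widehat G)$ supported in $Q$ is closed, hence weakly closed, so $\supp k\subseteq Q$, and therefore $k\in L^1(\widehat G)$ as well. I then set $f:=\mathcal{F}^*(k)$, which is continuous on $G$. For each fixed $g\in G$ the function $\chi\mapsto\overline{\chi(g)}\,\mathbb{1}_Q(\chi)$ lies in $L^2(\widehat G)$, and pairing $\widehat{f_n}$ against it --- using $\supp\widehat{f_n}\subseteq Q$ and the Inversion Theorem in the form $f_n=\mathcal{F}^*(\widehat{f_n})$ --- shows $f_n(g)\to f(g)$. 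Thus $f_n\to f$ pointwise on $G$; as each $f_n$ is positive definite and $f$ is continuous, $f\in\mathcal{P}(G)$.

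With the candidate $f$ in hand I would verify the four defining conditions of $\mathcal{G}(W,Q)_G$ and extremality. From $f_n(0)\to f(0)$ we get $f(0)=1$. If $g$ lies in the open set $G\setminus W$, then $f_n(g)\le 0$ for every $n$ (because $\supp(f_n)_+\subseteq W$), so $f(g)\le 0$; as $W$ is closed, $\supp f_+\subseteq W$, whence $0\le f_+\le\mathbb{1}_W$ and $f_+\in L^1(G)$. For the negative part, $(f_n)_-\to f_-$ pointwise, so Fatou's lemma gives $\|f_-\|_1\le\liminf_n\|(f_n)_-\|_1\le\lambda_G(W)$; hence $f\in L^1(G)$, so $f\in\mathcal{P}(G)\cap L^1(G)$. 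The Inversion Theorem then supplies $\widehat{f}\in L^1(\widehat G)$ with $f=\mathcal{F}^*(\widehat{f})$; comparing this with $f=\mathcal{F}^*(k)$ and using injectivity of $\mathcal{F}^*$ on $L^1(\widehat G)$ yields $\widehat{f}=k$, so $\supp\widehat{f}\subseteq Q$ and $f\in\mathcal{G}(W,Q)_G$. Finally $\int_G (f_n)_+\,d\lambda_G\to\int_G f_+\,d\lambda_G$ by dominated convergence (dominating function $\mathbb{1}_W$), while $\int_G f_-\,d\lambda_G\le\liminf_n\int_G (f_n)_-\,d\lambda_G$ by Fatou; subtracting,
\[
\int_G f\,d\lambda_G=\int_G f_+\,d\lambda_G-\int_G f_-\,d\lambda_G\;\ge\;\limsup_n\left(\int_G(f_n)_+\,d\lambda_G-\int_G(f_n)_-\,d\lambda_G\right)=\mathcal{D}(W,Q)_G,
\]
and since $f\in\mathcal{G}(W,Q)_G$ the reverse inequality is automatic, so $f$ is extremal.

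The step I expect to be the main obstacle is the compactness itself: one must choose a topology that is simultaneously \emph{sequentially} compact on the relevant family and compatible with all four constraints defining $\mathcal{G}(W,Q)_G$ as well as with lower semicontinuity of $f\mapsto\int_G f\,d\lambda_G$. This is the reason I work inside the Hilbert space $L^2(\widehat G)$ rather than taking weak-$*$ limits of the measures $\widehat{f_n}\,d\lambda_{\widehat G}$ on $Q$: for a general LCA group $\widehat G$ need not be metrizable, so Banach--Alaoglu would only produce convergent subnets, not subsequences. A further delicate point hidden in this scheme is that the negative parts $(f_n)_-$ are merely uniformly bounded in $L^1$, not uniformly integrable, so they admit no straightforward passage to the limit; Fatou's lemma is the only tool available there, and it must be checked --- fortunately it is so --- that the inequality it yields points in exactly the direction needed to force, together with the admissibility of $f$, the equality $\int_G f\,d\lambda_G=\mathcal{D}(W,Q)_G$.
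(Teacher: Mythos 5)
Your proof is correct, but it follows a genuinely different route from the paper. The paper first proves relative compactness of $\mathcal{G}(W,Q)_G$ in the compact convergence topology on $C(G)$ (via a total-boundedness argument with a finite net of characters drawn from $Q$), then needs weak-$^*$ sequential compactness in $L^\infty(G)=\left(L^1(G)\right)^*$ to identify $\supp\widehat f\subseteq Q$; since that requires $L^1(G)$ to be weakly compactly generated, i.e.\ $G$ to be $\sigma$-compact, roughly half of the paper's proof is devoted to reducing the general case to the open $\sigma$-compact subgroup $G_0$ generated by $W$ (compactness of $Q^*$, equality of the two extremal quantities, trivial extension). You instead perform the compactness step entirely on the Fourier side: the uniform bound $\|\widehat{f_n}\|_2=\|f_n\|_2\le\sqrt{2\lambda_G(W)}$ plus weak sequential compactness of bounded sequences in a Hilbert space (valid without any separability --- pass to the separable closed span of the sequence) gives $\widehat{f_n}\rightharpoonup k$ with $k$ supported in $Q$, and pairing against $\overline{\chi(g)}\,\mathbb{1}_Q\in L^2(\widehat G)$ recovers pointwise convergence of $f_n$ directly. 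This eliminates the $\sigma$-compactness reduction altogether and is arguably cleaner; what it gives up is the locally uniform convergence recorded in the paper's selection lemma and the remark that the extremal function may be taken supported in $G_0$. Two small details you should make explicit: the identity $\widehat f=k$ obtained from injectivity of $\mathcal{F}^*$ on $L^1(\widehat G)$ is only an almost-everywhere statement, so to conclude $\supp\widehat f\subseteq Q$ you should note that a continuous function vanishing $\lambda_{\widehat G}$-a.e.\ on the open set $\widehat G\setminus Q$ vanishes there identically (Haar measure is positive on nonempty open sets); and the weak sequential compactness claim in a possibly non-separable $L^2(\widehat G)$ deserves the one-line justification above. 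Neither is a gap.
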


Note that the existence of an extremal function might be helpful in calculating or estimating the extremal constant itself. That explains the effort undertaken, for instance in \cite{Berdysheva, CG, CKMRV, GSS, Gorbachev, GI, GT, I1, I2} to prove existence of extremal functions. As we will see, the argument of \cite{Berdysheva} cannot be directly copied here, because the estimation of modulus of smoothness, direct use and decrease estimates of Bessel functions, or $\sigma$-compactness of the underlying group $G$ are no longer available.

\section{Proof}

Recall that a Banach space $X$ is called \emph{weakly compactly generated} (WCG for short) if it has a weakly compact subset whose linear span is dense in $X$. Fundamental examples of such spaces are separable normed spaces and reflexive Banach spaces.

An important property what we shall apply in our argument is that the unit ball of the dual space of a WCG space is weak-$^*$ sequentially compact (see \cite{Diestel}, page 148).

For a general LCA group $G$, it might be difficult to characterize when $L^1(G)$ turns to be a WCG space, however,
a sufficient condition for that is the $\sigma$-compactness of $G$. This sufficiency can be seen by composing two well-known results. First note that the space $L^1(X,\mu)$ is WCG when the occurring measure $\mu$ is $\sigma$-finite on $X$ (see \cite{Phelps}, page 36). Second the Haar measure on the LCA group $G$ is $\sigma$-finite exactly when $G$ is $\sigma$-compact.
Moreover, in that case we have the duality $\left(L^1(G)\right)^*=L^{\infty}(G)$ of Banach spaces (see \cite[(20.20.) Theorem]{HS} and cf. \cite{Szekelyhidi}, page 11) because a $\sigma$-finite measure is decomposable.


The proof of Theorem \ref{T:main} rests heavily on a technical lemma.

\begin{lemma}\label{L:precompact}
Let $W\subseteq G$ be closed and of finite Haar measure and let $Q\subseteq \widehat{G}$ be compact.
Then the function class $\mathcal{G}(W,Q)_G\subseteq C(G)$ is relatively compact in the compact convergence topology.
\end{lemma}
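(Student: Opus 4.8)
The plan is to prove relative compactness in the compact-open topology by verifying the hypotheses of the Arzelà–Ascoli theorem (in the form valid for a locally compact Hausdorff space $G$): a subset of $C(G)$ is relatively compact in the compact convergence topology if and only if it is pointwise bounded and equicontinuous. Pointwise boundedness is immediate: by Lemma \ref{L:positivedefprop}(1a), every $f \in \mathcal{G}(W,Q)_G$ satisfies $|f(g)| \le f(0) = 1$ for all $g \in G$, so the family is uniformly bounded by $1$. The substance of the argument is therefore equicontinuity, and here is where the band-limitedness, i.e. $\supp\widehat f \subseteq Q$ with $Q$ compact, must be exploited.

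To get equicontinuity, I would use the Inversion Theorem as recalled in the excerpt: since each $f \in \mathcal{G}(W,Q)_G$ lies in $\mathcal{P}(G) \cap L^1(G)$, we have $f = \mathcal{F}^*(\widehat f)$, that is,
\[
f(g) = \int_{\widehat G} \widehat f(\chi)\, \delta_g(\chi)\, d\lambda_{\widehat G}(\chi) = \int_Q \widehat f(\chi)\, \chi(g)\, d\lambda_{\widehat G}(\chi),
\]
the last equality because $\widehat f$ is supported in $Q$. Hence for $g, h \in G$,
\[
|f(g) - f(h)| \le \int_Q |\widehat f(\chi)|\, |\chi(g) - \chi(h)|\, d\lambda_{\widehat G}(\chi).
\]
Now I need a uniform bound on $\|\widehat f\|_{L^1(\widehat G)}$ over the class, and a uniform control of $|\chi(g)-\chi(h)|$ for $\chi \in Q$. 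For the latter: the evaluation map $(\chi, g) \mapsto \chi(g)$ is jointly continuous on $\widehat G \times G$, and by Pontryagin duality the elements of $G$ act as characters $\delta_g$ on the LCA group $\widehat G$; since $Q$ is compact, the family $\{\delta_g|_Q : g \in G\}$ is equicontinuous on $Q$ — this is precisely the statement that, given a neighbourhood structure, one can find for each $\varepsilon > 0$ a neighbourhood $V$ of $0$ in $G$ with $|\chi(g) - \chi(h)| < \varepsilon$ for all $\chi \in Q$ whenever $g - h \in V$. (Concretely, $\chi(g) - \chi(h) = \chi(h)(\chi(g-h) - 1)$, so it suffices that $\chi(g-h)$ be close to $1$ uniformly over $\chi \in Q$, which follows from compactness of $Q$ and the definition of the compact-open topology on $\widehat{\widehat G} \cong G$ together with $\delta$ being a homeomorphism.)

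For the uniform $L^1$-bound on $\widehat f$: since $\widehat f \ge 0$ is not assumed, but $f$ is positive definite we do have $\widehat f \ge 0$ — indeed positive definiteness of $f \in L^1$ forces $\widehat f \ge 0$ (Bochner), so $\widehat f \ge 0$ on $\widehat G$, supported in $Q$, and then
\[
\|\widehat f\|_{L^1(\widehat G)} = \int_Q \widehat f\, d\lambda_{\widehat G} = \mathcal{F}^*(\widehat f)(0) = f(0) = 1.
\]
Thus $\|\widehat f\|_{L^1} = 1$ uniformly over the class. Combining, $|f(g) - f(h)| \le \sup_{\chi \in Q}|\chi(g) - \chi(h)| \to 0$ as $g - h \to 0$ in $G$, uniformly in $f \in \mathcal{G}(W,Q)_G$, which is equicontinuity (indeed uniform equicontinuity). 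Arzelà–Ascoli then yields relative compactness in the compact convergence topology, completing the proof. I expect the main obstacle to be the careful justification of the uniform equicontinuity of $\{\delta_g|_Q\}$, i.e. extracting from compactness of $Q \subseteq \widehat G$ and the definition of the dual topology a single neighbourhood $V$ of the identity in $G$ that works simultaneously for all $\chi \in Q$; everything else is a direct application of the Inversion Theorem and Bochner's theorem together with Lemma \ref{L:positivedefprop}. One should also note that the closed set $W$ and the support condition on $f_+$ play no role in this lemma — they will enter only later, when one checks that the limit function still belongs to $\mathcal{G}(W,Q)_G$.
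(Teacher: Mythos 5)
Your proof is correct, but it takes a genuinely different route from the paper. You verify the hypotheses of the Arzel\`a--Ascoli theorem (pointwise boundedness from $|f|\le f(0)=1$, plus uniform equicontinuity from $|f(g)-f(h)|\le \|\widehat f\|_{L^1}\sup_{\chi\in Q}|\chi(g-h)-1|$ with $\|\widehat f\|_{L^1}=f(0)=1$ by Bochner and inversion), where the key input is that compactness of $Q$ together with the joint continuity of the evaluation map $(g,\chi)\mapsto\chi(g)$ yields a single neighbourhood $V$ of $0$ in $G$ with $\sup_{\chi\in Q}\sup_{v\in V}|\chi(v)-1|<\varepsilon$. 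The paper instead proves total boundedness directly: it takes a finite $\varepsilon$-net $\{\chi_1,\dots,\chi_n\}$ of $Q$ in $\|\cdot\|_{C(K)}$, approximates each $f$ by the convex combination $\sum_j c_j(f)\chi_j$ with $c_j(f)=\int_{Q_j}\widehat f\,d\lambda_{\widehat G}\ge 0$ summing to $1$, and then discretizes the coefficients to produce an explicit finite $2\varepsilon$-net; relative compactness then follows from the equivalence of total boundedness and relative compactness in the (complete) space $C(G)$. Both arguments rest on the same two facts --- $\widehat f\ge 0$ with $\int\widehat f=f(0)=1$, and compactness of $Q$ giving uniform control of the characters on compact subsets of $G$ --- and, as you note, neither uses $W$. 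Your route is shorter and even gives uniform equicontinuity of the whole class, but it requires you to invoke an Ascoli theorem valid for general locally compact Hausdorff (non-metrizable, non-$\sigma$-compact) $G$ and the joint continuity of the duality pairing (Hewitt--Ross (23.15)); the authors' remark that the Arzel\`a--Ascoli argument of Berdysheva--R\'ev\'esz ``cannot be carried out'' refers to their quantitative modulus-of-continuity estimates, not to the qualitative equicontinuity you establish, so your approach is a legitimate alternative provided those two background facts are cited carefully. The paper's net construction buys a self-contained argument using only the elementary notion of total boundedness and Lemma~\ref{L:equivalentstates}, at the cost of being longer.
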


In the setting $G=\mathbb{R}^d$ the above lemma has been a part of the proof of \cite[Proposition 3.5]{Berdysheva}, and its proof is based on the Arzel\'a-Ascoli Theorem and on the estimation of the modulus of continuity. We are unable to carry out this argument in the general case of LCA groups.
Thus, we will prove the LCA group counterpart in a slightly different way, involving some basic notions and properties from the theory of topological vector spaces, which are given in the forthcoming paragraphs.

Let $A\subseteq X$ be any subset of a locally convex (Hausdorff) topological vector space $X$.
Then $A$ is called \emph{totally bounded}, whenever
for every neighbourhood $V$ of the origin,
there is a finite subset $S\subseteq A$
such that $A$ is contained in $S+V$.
Obviously, this requirement can be
equivalently assumed only for
neighbourhoods belonging to a given
neighbourhood base of $X$.

A topological vector space $X$ is called \emph{complete} if every Cauchy net has a limit in $X$.
Further for any locally convex topological
space $X$, there is a unique
(up to a linear homeomorphism) pair
$(\widetilde{X},j)$ of a complete space $\widetilde{X}$ and a linear homeomorphic
embedding $j: X \to \widetilde{X}$ such
that $j(X)$ is dense in $\widetilde{X}$.

If $G$ is an LCA group, then the
relative compactness can be verified by using the following result from the  theory of locally convex
spaces (see, for instance \cite[3.5.1. Theorem]{Jarchov}).

\begin{lemma} \label{L:equivalentstates}
For every subset $E$ of a locally convex topological vector space $X$, the following are equivalent.
\begin{itemize}
    \item[1)] $E$ is totally bounded.
    \item[2)] $E$ is relatively compact in the completion of $X$.
    \item[3)] Every sequence of $E$ has a cluster point in the completion of $X$.
\end{itemize}
\end{lemma}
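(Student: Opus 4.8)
The plan is to prove the cycle of implications $(1)\Rightarrow(2)\Rightarrow(3)\Rightarrow(1)$, working throughout with the translation-invariant uniformity that the locally convex topology induces on $X$ and on its completion $\widetilde{X}$. The preliminary observation is that total boundedness is an intrinsic uniform notion: since the canonical map $j\colon X\to\widetilde{X}$ is a linear homeomorphic embedding with dense image, every neighbourhood of the origin in $\widetilde{X}$ meets $X$ in a neighbourhood of the origin of $X$, and every neighbourhood of the origin in $X$ extends to one in $\widetilde{X}$. Consequently $E$ is totally bounded in $X$ if and only if $j(E)$ is totally bounded in $\widetilde{X}$, and I would freely identify $E$ with $j(E)$.

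For $(1)\Rightarrow(2)$, I would first recall that the closure of a totally bounded set is again totally bounded, so that the closure $\overline{E}$, taken in $\widetilde{X}$, is totally bounded. Being a closed subset of the complete space $\widetilde{X}$, the set $\overline{E}$ is itself complete. The decisive classical fact is that, in a uniform space, a subset is compact precisely when it is complete and totally bounded; applied to $\overline{E}$ this yields compactness of $\overline{E}$, that is, relative compactness of $E$ in $\widetilde{X}$. The implication $(2)\Rightarrow(3)$ is then immediate: if $\overline{E}$ is compact, every net valued in $\overline{E}$ — in particular every sequence drawn from $E$ — admits a cluster point in $\overline{E}\subseteq\widetilde{X}$, since the existence of a cluster point for every net is one of the standard characterisations of compactness.

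The implication $(3)\Rightarrow(1)$ is where the real work lies, and I would argue by contraposition. Suppose $E$ fails to be totally bounded. Then there is a neighbourhood $V$ of the origin such that no finite subset $S\subseteq E$ satisfies $E\subseteq S+V$. Fixing a symmetric neighbourhood $U$ of the origin with $U+U\subseteq V$, I would inductively select points $x_1,x_2,\dots\in E$ with $x_{n+1}\notin\bigcup_{i\le n}(x_i+V)$; this choice is possible at every stage precisely because the finite set $\{x_1,\dots,x_n\}$ never gives $E\subseteq\{x_1,\dots,x_n\}+V$. The resulting sequence is $U$-separated: if $i<j$ gave $x_i,x_j\in y+U$ for some $y$, then $x_j-x_i\in U-U=U+U\subseteq V$ by symmetry of $U$, contradicting $x_j\notin x_i+V$. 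Hence no point $y\in\widetilde{X}$ can be a cluster point of $(x_n)$, since any such $y$ would force infinitely many terms into $y+U$, and two of them would violate the separation just established. This contradicts $(3)$ and closes the cycle.

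The main obstacle is the direction $(3)\Rightarrow(1)$, and within it the delicate point is that we operate in a possibly non-metrizable setting, so a cluster point of a sequence need not arise from a convergent subsequence and must be handled directly through its neighbourhood filter; the separation estimate $U+U\subseteq V$ is exactly what makes the purely sequential hypothesis $(3)$ strong enough to preclude the failure of total boundedness. A secondary point requiring care is the transfer of total boundedness between $X$ and $\widetilde{X}$, together with the invocation of the uniform-space theorem that completeness plus total boundedness equals compactness, which is the only place where the completeness of $\widetilde{X}$ genuinely enters the argument.
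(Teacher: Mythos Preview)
The paper does not supply its own proof of this lemma; it is quoted as a standard fact from the theory of locally convex spaces, with a reference to Jarchow, \emph{Locally Convex Spaces}, Theorem~3.5.1. So there is nothing in the paper to compare your argument against.

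Your proof is the standard one and is correct. The cycle $(1)\Rightarrow(2)\Rightarrow(3)\Rightarrow(1)$, with the uniform-space characterisation ``complete $+$ totally bounded $=$ compact'' for the first implication and the contrapositive construction of a $U$-separated sequence for the last, is precisely how this result is established in the references. One cosmetic point: in the step $(3)\Rightarrow(1)$ it is cleanest to invoke the equivalence you set up at the outset and take $V$, $U$ as neighbourhoods of $0$ in $\widetilde{X}$ from the start, so that $y+U$ is automatically a neighbourhood of the putative cluster point $y\in\widetilde{X}$; with that reading your separation argument goes through without any further adjustment.
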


\begin{proof}[Proof of Lemma~\ref{L:precompact}]
According to Lemma \ref{L:equivalentstates} we are going to show that $\mathcal{G}(W,Q)_G$ is totally bounded. First note that in the space $C(G)$ (equipped with the compact convergence topology) for any compact set $K\subseteq G$ and any $\varepsilon > 0$, the $U(f;K,\varepsilon)$-neighbourhood of the function $f\in C(G)$ is defined as
\[
U(f;K,\varepsilon) = \{h \in C(G) ~ : ~ \|h-f\|_{C(K)} < \varepsilon \}.
\]
This forms the defining neighbourhood base for compact convergence on $C(G)$.

So our aim is to show that for any
$\varepsilon >0$ and any compact set
$K\subseteq G$, there exists a finite set
$\{ f_1, \ldots , f_n \}\subseteq \mathcal{G}(W,Q)_G$ such that
\[
\mathcal{G}(W,Q)_G \subseteq \bigcup_{j=1}^n U(f_j; K,\varepsilon).
\]

As by assumption $Q\subseteq \widehat{G}$ is compact in the compact convergence topology, $Q$ is totally bounded as well. It means that there exists a finite set $\{\chi_1, \ldots, \chi_n\} \subseteq Q$ such that for every $\gamma \in Q$ we get that $\|\gamma - \chi_j\|_{C(K)} < \varepsilon$ for some $j\in \{1, \ldots, n \}$.
Via the disjointization procedure
\[
\begin{gathered}
Q_1:=U(\chi_1;K,\varepsilon) \cap Q  \\
Q_2:=U(\chi_2;K,\varepsilon) \cap (Q \setminus Q_1) \\
\vdots \\
Q_n:=U(\chi_n;K,\varepsilon) \cap (Q \setminus (Q_1 \cup \ldots \cup Q_{n-1}))
\end{gathered}
\]
we obtain a partition $\{Q_1, \ldots , Q_n \}$ of $Q$ such that every $Q_j$ ($j=1,\ldots,n$) is a Borel sets with compact closure, and for any $\gamma \in Q_j$ we have $\|\gamma-\chi_j\|_{C(K)}<\varepsilon$.

Next choose an element $f\in \mathcal{G}(W,Q)_G$ and define
\[
F(g) := \sum_{j=1}^n \chi_j(g)\int_{Q_j}\widehat{f}(\chi)d\lambda_{\widehat{G}}(\chi) \equiv \sum_{j=1}^n c_j(f) \chi_j(g)
\]
with
$c_j(f):=\int_{Q_j} \widehat{f}(\chi) d\lambda_{\widehat{G}}(\chi)$
where $c_j(f)\geq 0$ because of the positive definiteness of $f$, and 
\[
\sum_{j=1}^n c_j(f) = \int_{\widehat{G}} \widehat{f}(\chi) d\lambda_{\widehat{G}}(\chi) = f(0) = 1 .
\]
Note that $f \in \mathcal{G}(W,Q)_G\subseteq L^1(G)$
and $\supp{\widehat{f}}\subseteq Q$
implies $\widehat{f}\in L^p\left(\widehat{G}\right) \quad
(1\leq p \leq \infty$). By the Inversion Theorem, we get for $g\in K$ in view of $\widehat{f} \geq 0$ that
\[
|f(g) - F(g)| = \left|  \sum_{j=1}^n \int_{Q_j} \left(\chi(g)-\chi_j(g)\right)\widehat{f}(\chi) d \lambda_{\widehat{G}}(\chi) \right| \leq  \varepsilon \cdot f(0) = \varepsilon
\]
and so $\|f-F\|_{C(K)} \leq \varepsilon$.
Let $m>n/\varepsilon$ be an integer, and define $d_j(f):=[m \cdot c_j(f)]/m$.
Then we have
\[
\norm{  \sum_{j=1}^n c_j(f)\chi_j-\sum_{j=1}^n d_j(f)\chi_j }_{C(K)} < \sum_{j=1}^n \frac{1}{m}=\frac{n}{m} < \varepsilon.
\]
It follows that
\[
\norm{ f - \sum_{j=1}^n d_j(f) \chi_j}_{C(K)} \leq \|f-F\|_{C(K)} + \norm{ F - \sum_{j=1}^n d_j(f) \chi_j  }_{C(K)} < 2\varepsilon.
\]
For any choice of the function $f\in \mathcal{G}(W,Q)_G$ one has $d_j(f) \in \{ 0, 1/m, \ldots, 1 \}$, whence the set
\begin{equation} \label{eq:linearcomb}
    \left\{ \sum_{j=1}^m r_j \chi_j ~ : ~ r_j \in \{ 0, 1/m, \ldots, 1 \}   \right\}
\end{equation}
forms a finite $2\varepsilon$-net for $\mathcal{G}(W,Q)_G$ on K with respect to the compact convergence topology. This holds for any base neighbourhood of the form $U\left({\bf 0};K,2\varepsilon\right)$.
Therefore, we found a finite net
(\ref{eq:linearcomb}) such that the
respective translates of
$U\left({\bf 0};K,2\varepsilon\right)$
cover $\mathcal{G}(W,Q)_G$, so the function set
$\mathcal{G}(W,Q)_G$ is totally bounded.
\end{proof}

In the sequel we make crucial use of the following selection lemma.

\begin{lemma} \label{L:fn_conv}
Suppose that $G$ is $\sigma$-compact.
Let $(f_n)$ be a sequence in $\mathcal{G}(W,Q)_G$.
Then there exists a subsequence of $(f_n)$ which converges to a function $f\in\mathcal{G}(W,Q)_G$ uniformly on every compact set and also in the weak-$^*$ sense. Moreover, we have the inequality
\begin{equation} \label{eq:int_inequality}
\int_G f d\lambda_G \geq \limsup_{n\to\infty}\int_G f_n d\lambda_G.
\end{equation}
\end{lemma}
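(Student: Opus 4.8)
The plan is to combine Lemma~\ref{L:precompact} with the weak-$^*$ sequential compactness of the unit ball of $L^\infty(G)$. Since $G$ is $\sigma$-compact, $L^1(G)$ is WCG and its dual is $L^\infty(G)$, so the closed ball of radius $1$ in $L^\infty(G)$ is weak-$^*$ sequentially compact. First I would observe that every $f_n$ lies in this ball because $\|f_n\|_\infty = f_n(0) = 1$ by Lemma~\ref{L:positivedefprop}(1a). By Lemma~\ref{L:precompact} the family $\mathcal{G}(W,Q)_G$ is relatively compact in the compact convergence topology, and since $G$ is $\sigma$-compact it is an exhaustion by countably many compacts $K_1\subseteq K_2\subseteq\cdots$, so the compact convergence topology on $C(G)$ is metrizable; hence from $(f_n)$ one can extract a subsequence converging uniformly on every compact set to some $f\in C(G)$. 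Passing to a further subsequence and using weak-$^*$ sequential compactness, I may assume this same subsequence converges weak-$^*$ in $L^\infty(G)$ to some $h\in L^\infty(G)$.

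The next step is to identify the two limits and check membership in $\mathcal{G}(W,Q)_G$. Uniform-on-compacts convergence forces pointwise convergence, and weak-$^*$ convergence tested against $L^1$ functions supported in a fixed compact set (together with uniform boundedness and the dominated convergence theorem) shows $f = h$ almost everywhere, so $f\in L^\infty(G)$ and the subsequence converges to $f$ both uniformly on compacts and weak-$^*$. Now I verify the defining constraints of $\mathcal{G}(W,Q)_G$ survive: $f(0) = \lim f_n(0) = 1$ by pointwise convergence; positive definiteness is preserved because each inequality \eqref{posdef} is a finite sum and passes to the pointwise limit, so $f\in\mathcal{P}(G)$; the support condition $\supp f_+\subseteq W$ follows since $W$ is closed and $f_n(g)\le 0$ for $g\notin W$ passes to the limit. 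To get $f\in L^1(G)$ with $\supp\widehat{f}\subseteq Q$, I would test against the conjugate Fourier transform: for $\chi\notin Q$, one has $\widehat{f_n}(\chi) = 0$, and I would show $\widehat{f}$, a priori defined as a distribution or via the Bochner-type representation of positive definite functions, is supported in $Q$ by pairing $f_n$ with characters in a neighbourhood of $G\setminus$(something) — more precisely, using that $f_n = \mathcal{F}^*(\widehat{f_n})$ with $\widehat{f_n}$ a nonnegative measure of total mass $1$ supported in the fixed compact $Q$, one extracts (by weak-$^*$ compactness of probability measures on the compact $Q$) a weakly convergent subsequence of the $\widehat{f_n}$ to a probability measure $\mu$ on $Q$, whence $f = \mathcal{F}^*(\mu)$ pointwise, which shows $f\in\mathcal{P}(G)$, $f\in L^1(G)$ once one knows $\mu\in L^1$, and $\supp\widehat{f}\subseteq Q$.

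For the integral inequality \eqref{eq:int_inequality}, I would split $f_n = (f_n)_+ - (f_n)_-$. The negative part is controlled: $(f_n)_-\le (f_n)_- \le \mathbb{1}_{G\setminus W}\cdot\|f_n\|_\infty$... actually $(f_n)_+$ is supported in $W$, so $\int_G f_n\, d\lambda_G = \int_W (f_n)_+\, d\lambda_G - \int_G (f_n)_-\, d\lambda_G$, and $\int_G (f_n)_-\,d\lambda_G \ge 0$. The cleaner route: since $\widehat{f_n}\ge 0$ and integrates to $f_n(0)=1$, one has $\int_G f_n\,d\lambda_G = \widehat{f_n}(\mathbf{1})\le 1$, so the $\limsup$ is finite; and using $f_n \le (f_n)_+ \le \mathbb{1}_W\cdot 1 \in L^1(G)$ since $\lambda_G(W)<\infty$, Fatou's lemma applied to $\mathbb{1}_W - f_n \ge 0$ (using $f_n\le\mathbb{1}_W$ pointwise as $f_n\le 0$ off $W$ and $f_n\le 1$ on $W$) gives $\int_G(\mathbb{1}_W - f)\,d\lambda_G \le \liminf\int_G(\mathbb{1}_W - f_n)\,d\lambda_G$, which rearranges to $\int_G f\,d\lambda_G \ge \limsup\int_G f_n\,d\lambda_G$. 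The main obstacle I anticipate is the careful handling of the Fourier side: making rigorous that $\widehat{f_n}$ are genuine $L^1$-functions (not just measures) on $\widehat{G}$, that their weak-$^*$ limit $\widehat{f}$ lands in $L^1(\widehat{G})$ rather than merely in the space of measures, and thereby that $f\in L^1(G)$ — here one must use that $\widehat{f_n}$ all have support in the fixed compact $Q$ together with the uniform bound $\widehat{f_n}(\mathbf{1}) = 1$, but upgrading weak-$^*$ measure convergence to $L^1$-membership of the limit requires an extra uniform integrability argument or an appeal to the structure of $\mathcal{G}(W,Q)_G$ via the Plancherel theorem.
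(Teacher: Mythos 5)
The genuine gap is the step $\supp\widehat{f}\subseteq Q$ (together with the related question of $f\in L^1(G)$), which you yourself flag as the ``main obstacle'' and then leave unresolved. As written, your plan extracts a weak-$^*$ convergent subsequence of the probability measures $\widehat{f_n}\,d\lambda_{\widehat{G}}$ on the compact set $Q$ and needs the limit measure $\mu$ to be absolutely continuous with density $\widehat{f}$; neither the extraction nor this identification is justified in your text. Both are repairable, but with ingredients you have not named: the extraction needs either a subnet argument (which suffices here, because the candidate limit $\mathcal{F}^*(\mu)$ is compared pointwise with the already-convergent sequence $f_n(g)$) or the fact that $\sigma$-compactness of $G$ forces $\widehat{G}$, hence $Q$, to be metrizable; and the identification $\mu=\widehat{f}\,d\lambda_{\widehat{G}}$ follows only after one knows $f\in L^1(G)\cap\mathcal{P}(G)$, via the Inversion Theorem (which gives $f=\mathcal{F}^*(\widehat{f})$ with $0\le\widehat{f}\in L^1(\widehat{G})$) combined with the uniqueness part of Bochner's theorem. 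The paper avoids this entirely by a different device: for $\gamma\notin Q$ it builds the bump $\theta_\gamma=(\mathbb{1}_B\star\mathbb{1}_B)(\cdot\,\gamma^{-1})$ with $\gamma BB\cap Q=\emptyset$, shows $h:=\mathcal{F}^*(\theta_\gamma)\in L^1(G)$, and tests the weak-$^*$ convergence $f_n\to f$ in $L^\infty(G)=(L^1(G))^*$ against translates of $h$ to get $f\star h\equiv 0$, whence $\widehat{f}(\gamma)\lambda_{\widehat{G}}(B)=0$. Until you supply one of these completions, the support claim is not proved.

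The remaining architecture matches the paper's (locally uniform limit via Lemma~\ref{L:precompact}, a further weak-$^*$ convergent subsequence in $L^\infty(G)$, identification of the two limits, preservation of $f(0)=1$, of positive definiteness, and of $\supp f_+\subseteq W$). Your treatment of \eqref{eq:int_inequality} by a single application of Fatou's lemma to $\mathbb{1}_W-f_n\ge 0$ is correct and in fact slightly cleaner than the paper's separate handling of $(f_n)_+$ (dominated convergence) and $(f_n)_-$ (Fatou); moreover, since $\liminf_n\int_G(\mathbb{1}_W-f_n)\,d\lambda_G\le\lambda_G(W)<\infty$, this very step already yields $\mathbb{1}_W-f\in L^1(G)$ and hence $f\in L^1(G)$ — you should say so explicitly, as it is exactly the input your Fourier-side argument is missing. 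One incidental error: the claim $\int_G f_n\,d\lambda_G=\widehat{f_n}(\mathbf{1})\le 1$ does not follow from $\int_{\widehat{G}}\widehat{f_n}\,d\lambda_{\widehat{G}}=1$ (a pointwise value of a nonnegative integrable function is not bounded by its integral, and $\mathcal{D}(W,Q)_G$ typically exceeds $1$); the finiteness you want comes instead from $\int_G f_n\,d\lambda_G\le\int_G(f_n)_+\,d\lambda_G\le\lambda_G(W)$.
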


\begin{proof}[Proof of Lemma~\ref{L:fn_conv}]
In the first part of the proof, we use the arguments given in \cite{Berdysheva}.
Let $(f_n)$ be a sequence in 
$\mathcal{G}(W,Q)_G$. Using Lemmata
 \ref{L:precompact}, \ref{L:equivalentstates} 
 and the completeness of $C(G)$ with 
 respect to the compact convergence topology, we conclude that there exists a subsequence of $(f_n)$ which tends to some $f\in C(G)$ uniformly on every compact set, and thus also in the pointwise sense.
Without loss of generality we may and do assume that $(f_n)$ itself converges to $f$.

Next we intend to show that $f\in \mathcal{G}(W,Q)_G$.
Since the pointwise limit of positive definite functions is likewise positive definite, it follows that $f \gg 0 $ holds.
As $W \subseteq G$ is closed, we clearly have $\supp f_+ \subseteq \overline{W} = W$, $f(0)=1$ and $|f|\leq 1$.

Now we are concerned with verifying that $f$ belongs to $L^1(G)$.
Writing $f=f_+ - f_-$ and, in a similar fashion, $f_n=(f_n)_+-(f_n)_-$ one has $(f_n)_{\pm} \to f_{\pm}$ in the pointwise sense. An application of Fatou's lemma gives us that
\begin{equation}\label{eq:fnminus}
\int_G  f_- d\lambda_G \le \liminf_{n\to\infty} \int_G (f_n)_- d\lambda_G .
\end{equation}
For the positive parts note that $(f_n)_+$
and $f_+$ are all supported in $W$,
and $|(f_n)_+|\le (f_n)_+(0)=1$,
all the functions $f_n$ belonging to $\mathcal{G}(W,Q)_G$.
That is, $(f_n)_+ \le \mathbb{1}_W$,
which is integrable because $W$ has
finite Haar measure. Therefore, the
Lebesgue Dominated Convergence Theorem yields
\begin{equation}\label{eq:fnplus}
\int_G  f_+ d\lambda_G = \lim_{n\to\infty} \int_G (f_n)_+ d\lambda_G.
\end{equation}
Note that then
\[
\begin{gathered}
\int_G |f| d\lambda_G =\int_G f_+ d\lambda_G + \int_G f_- d\lambda_G
 \le  \lim_{n\to \infty} \int_G (f_n)_+ d\lambda_G ~ +\\ + ~ \liminf_{n\to \infty} \int_G (f_n)_- d\lambda_G 
\le 2 \lim_{n\to \infty} \int_G (f_n)_+ d\lambda_G \le 2\lambda_G(W)
\end{gathered}
\]
because
\[
\int_G (f_n)_- d\lambda_G = \int_G \left(  (f_n)_+ - f_n \right)d\lambda_G  \le \int_G (f_n)_+ d\lambda_G
\]
for each $n$, for $\int_G f_n d\lambda_G  \ge 0$ in view of $f_n \gg 0$. Therefore, we have also proved $f \in L^1(G)$, that is, also $f\in C(G) \cap L^1(G) \cap L^\infty(G)$, whence it belongs to $L^2(G)$. In particular, $\widehat{f}$ does exist, is continuous and belongs to $L^2(\widehat{G})$.

It remains to show that $\supp \widehat{f} \subseteq Q$. 
Here we need to argue in a different way than \cite{Berdysheva} does.
Clearly, the linear functional
\[
\psi_{\rho}(\varphi):=\int_G  \varphi \overline{\rho} d \lambda_G,\qquad \textrm{for }\varphi \in L^1(G)
\]
belongs to the unit ball in the dual space of $L^1(G)$ for $\rho=f_n$ or $\rho=f$.

Using that $G$ is $\sigma$-compact, we have that the space $L^1(G)$ is WCG, moreover, $\left( L^1(G)\right)^*=L^{\infty}(G)$.
Thus there is a subsequence of $(f_n)$ (supposed to be itself $(f_n)$ again) which converges to some $f_0\in L^{\infty}(G)$ in the weak-$^*$ sense. It is not difficult to verify that $f_0$ must coincide with the locally uniform limit function $f$, that is, we have
\begin{equation} \label{weaklimit}
\int_G f_n \varphi d\lambda_G \longrightarrow \int_G f \varphi d\lambda_G, \qquad \textrm{for } \varphi \in L^1(G).
\end{equation}

Take any $\gamma \in \widehat{G}\setminus Q$, and a small symmetric neighbourhood $B$ of the unit element ${\bf 1}$ of $\widehat{G}$ with compact closure satisfying $\gamma {BB} \cap Q = \emptyset$. Define the functions $\theta_{\gamma}(\chi):=(\mathbb{1}_B \star \mathbb{1}_B)(\chi\gamma^{-1})$ and $\theta(\chi):=\theta_{\bf 1}(\chi)$.
Note that $\theta$ is compactly supported and $\theta({\bf 1})=(\mathbb{1}_B \star \mathbb{1}_B)({\bf 1})=\lambda_{\widehat{G}}(B)$.

Since $B$ is symmetric, we get $\mathcal{F}^*(\theta)=\left| \widehat{\mathbb{1}_B} \right|^2$, by elementary properties of the $L^2$-Fourier transform \cite[Section 1.6]{Rudin}.
This immediately yields that $\mathcal{F}^*(\theta)\in L^1(G)$. Indeed, one has
\[
\begin{gathered}
\| \mathcal{F}^*(\theta) \|_1 =
\norm{ \widehat{\mathbb{1}_B} }_2^2 = \norm{ \mathbb{1}_B }_2^2 = \lambda_{\widehat{G}}(B) < +\infty
\end{gathered}
\]
because $B$ has compact closure and the Haar measure is locally finite.
Thus, we also have
\begin{multline*}
h(g):=\mathcal{F}^*\left( \theta_\gamma \right)(g)= \gamma(g)\cdot \mathcal{F}^*( \mathbb{1}_B \star \mathbb{1}_B )(g) \\ =
\gamma(g)\cdot |\mathcal{F}^*(\mathbb{1}_B)(g)|^2 =
\gamma(g) \cdot \left| \widehat{\mathbb{1}_B}(\delta_g)\right|^2
\end{multline*}
where the last equality follows from the symmetry of $B$.
We see that $h\in L^1(G)$. So we can take $k:=f\star h \in L^1(G)$ for which we clearly have $\mathcal{F}(k)=\widehat{f}\theta_{\gamma}$.

Hence we conclude from \eqref{weaklimit} via the Plancherel Theorem that
\begin{multline*}
k(s)=\lim_{n\to \infty} \int_G f_n(g) h(s-g) d\lambda_G(g)
\\ =
\lim_{n\to \infty} \int_{\widehat{G}}
\widehat{f_n}(\chi)\delta_s(\chi)\theta_{\gamma}(\chi)d\lambda_{\widehat{G}}(\chi) = 0
\end{multline*}
in view of $\supp \widehat{f_n} \subseteq Q$ and $\{ \theta_{\gamma} \neq 0 \} \cap Q = \gamma {BB} \cap Q=\emptyset$.
Therefore, $k(s)=0$ holds for all $s\in G$. Taking Fourier transform gives $\widehat{f}\theta_{\gamma} \equiv 0$, in particular,
\[
0=\widehat{f}(\gamma)\theta_{\gamma}(\gamma)=\widehat{f}(\gamma)\theta({\bf1})=\widehat{f}(\gamma)\lambda_{\widehat{G}}(B).
\]
Thus, at any point $\gamma$ outside the set
$Q$ the function $\widehat{f}$
vanishes. It follows that
$\supp \widehat{f} \subseteq Q$
and so $f\in \mathcal{G}(W,Q)_G$ as wanted.

By substracting \eqref{eq:fnminus} from \eqref{eq:fnplus}, we immediately get the last statement of the Lemma:

\begin{multline*}
\int_G f d\lambda_G \geq \lim_{n \to \infty} \int_G (f_n)_+ d\lambda_G - \liminf_{n \to \infty} \int_g (f_n)_- d\lambda_G 
\\\geq
\limsup_{n\to\infty} \int_G \left( (f_n)_+ - (f_n)_- \right)=\limsup_{n\to\infty} \int_G f_n d\lambda_G.
\end{multline*}

\end{proof}

From the definition it is clear that the restriction of a positive definite function to a subgroup remains positive definite on the subgroup as well. For the following fact, the reader can consult with \cite[(32.43) (a)]{HewittRossII}.
\begin{lemma} \label{L:extension}
Let $H \leq G$ be a closed subgroup of $G$.
If the function $f:H\to \mathbb{C}$ is continuous and positive definite, then so is its trivial extension $\widetilde{f}:G\to \mathbb{C}$ defined by
    \begin{equation} \label{eq:trivi-extension}
    \widetilde{f}(g)=
    \begin{cases}
    f(g) &\quad\text{if } g\in H; \\
    0 &\quad\text{otherwise.}   \\
    \end{cases}
    \end{equation}
\end{lemma}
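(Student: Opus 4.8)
\noindent\emph{Proof proposal.} The plan is to check, one at a time, the two defining properties of a member of $\mathcal{P}(G)$ for the trivial extension $\widetilde f$: continuity on $G$, and the positive-definiteness inequality~\eqref{posdef}.

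First I would dispose of continuity. The property actually used here is the \emph{openness} of the subgroup (open subgroups of $G$ are in particular closed, and it is this stronger property that is at work — and that is available in our application, where $H$ will be an open $\sigma$-compact subgroup; for a closed but non-open subgroup $\widetilde f$ need not be continuous at the points of $H$). Assuming $H$ open, the set $G\setminus H$ is a union of cosets of $H$, each of which is open, so $G\setminus H$ is open and $\widetilde f\equiv 0$ is continuous there; and for $h\in H$ the subgroup $H$ is itself an open neighbourhood of $h$ on which $\widetilde f=f$ is continuous. Hence $\widetilde f\in C(G)$.

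Next, for~\eqref{posdef}, fix $n\in\mathbb N$, scalars $c_1,\dots,c_n\in\mathbb C$ and points $g_1,\dots,g_n\in G$. The crucial idea is to group the indices by cosets of $H$: let $I_1,\dots,I_m$ be the classes of the equivalence relation $j\sim k\iff g_j-g_k\in H$. Whenever $j$ and $k$ lie in distinct classes, $g_j-g_k\notin H$, so $\widetilde f(g_j-g_k)=0$; therefore
\[
\sum_{j,k=1}^{n}c_j\overline{c_k}\,\widetilde f(g_j-g_k)=\sum_{a=1}^{m}\ \sum_{j,k\in I_a}c_j\overline{c_k}\,f(g_j-g_k).
\]
Within a fixed class $I_a$, choose $j_0\in I_a$ and set $h_j:=g_j-g_{j_0}\in H$ for $j\in I_a$; then $g_j-g_k=h_j-h_k\in H$, so the inner sum equals $\sum_{j,k\in I_a}c_j\overline{c_k}\,f(h_j-h_k)$, which is nonnegative because $f$ is positive definite \emph{on $H$} (apply the definition to the points $\{h_j:j\in I_a\}\subseteq H$ with the scalars $\{c_j:j\in I_a\}$). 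Summing over $a$ gives $\widetilde f\gg 0$, and together with continuity this yields $\widetilde f\in\mathcal{P}(G)$.

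The step I expect to require the most care is conceptual rather than computational: spotting the coset partition, and being aware that continuity of $\widetilde f$ genuinely depends on $H$ being open. A more structural alternative would run through Bochner's theorem: write $f=\widehat\mu$ for a finite positive measure $\mu$ on $\widehat H=\widehat G/H^{\perp}$; since $H$ is open the annihilator $H^{\perp}$ is compact, and spreading $\mu$ over the cosets of $H^{\perp}$ by the normalised Haar measure of $H^{\perp}$ produces a finite positive measure on $\widehat G$ whose Fourier--Stieltjes transform is exactly $\widetilde f$, because $\int_{H^{\perp}}\psi(g)\,d\psi$ equals $1$ for $g\in H$ and $0$ for $g\notin H$.
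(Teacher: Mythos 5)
Your argument is correct, and it is in substance the proof behind the fact that the paper does not prove but merely cites (Hewitt--Ross (32.43)(a)): the coset-partition computation --- the quadratic form $\sum_{j,k}c_j\overline{c_k}\,\widetilde f(g_j-g_k)$ is block-diagonal over the cosets of $H$, and each block, after translating the points of one coset into $H$, is an instance of \eqref{posdef} for $f$ on $H$ --- is the standard route to this lemma, so you have essentially reconstructed the cited proof. Your caveat about continuity is well taken and points at a genuine imprecision in the statement as printed: since the paper reserves the term ``positive definite'' for continuous functions, the lemma as stated asserts continuity of $\widetilde f$, yet a closed subgroup that is not open has empty interior, so the trivial extension of any nonzero continuous positive definite $f$ (which satisfies $f(0)>0$) is discontinuous at $0$ --- e.g.\ $H=\mathbb{Z}\le\mathbb{R}$ with $f\equiv 1$ gives $\widetilde f=\mathbb{1}_{\mathbb{Z}}$. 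The algebraic inequality \eqref{posdef} survives for arbitrary subgroups, but continuity requires $H$ open, exactly as you say; this is harmless for the paper because its only use of the lemma is with $H=G_0$, the open $\sigma$-compact subgroup generated by $W$. Your Bochner-theoretic alternative is also sound for open $H$ (where $H^{\perp}$ is compact and the orthogonality relation $\int_{H^{\perp}}\psi(g)\,d\psi=\mathbb{1}_H(g)$ applies), though the ``spreading'' of $\mu$ over the cosets of $H^{\perp}$ would need to be formalised as a push-forward/convolution with the normalised Haar measure of $H^{\perp}$ rather than left as a picture.
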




Now we are in a position to prove Theorem \ref{T:main}.
Our strategy is the following.
First we prove the theorem in the case where the underlying group is $\sigma$-compact, and then we reduce the general case after some technical preparation to the $\sigma$-compact one.

\begin{proof}[Proof of Theorem~\ref{T:main}]
At first we suppose that $G$ is $\sigma$-compact. 
Using the definition of $\sup$, there is an extremal sequence $(f_n) \subseteq \mathcal{G}(W,Q)_G$ such that
\begin{equation} \label{eq:sequenceoffunctions}
    \int_G f_n d\lambda_G > \mathcal{D}(W,Q)_G-\frac{1}{n}, \quad n\in\mathbb{N}^+.
\end{equation}
According to Lemma \ref{L:fn_conv}, there exists a limit function $f\in \mathcal{G}(W,Q)_G$ such that a subsequence of $(f_n)$ converges to $f$. Without loss of generality, we may and do suppose that $(f_n)$ converges to $f$.
We show that $f$ is the extremal function in $\mathcal{G}(W,Q)_G$, that is,
$\int_G f d\lambda_G=\mathcal{D}(W,Q)_G$.
By using the definition of the extremal constant,
 inequality \eqref{eq:int_inequality} and definition \eqref{eq:sequenceoffunctions}, we get

\begin{align*} 
\mathcal{D}(W,Q)_G \geq \int_G f d\lambda_G \geq
\limsup_{n\to \infty} \int_G f_n d\lambda \geq \mathcal{D}(W,Q)_G 
\end{align*}
and thus we have equality everywhere in the last displayed chain of inequalities. This completes the proof when $G$ is $\sigma$-compact.

Assume now that $G$ is not $\sigma$-compact. 
Let $G_0$ the open, $\sigma$-compact subgroup which is generated by $W$, that is, $$V:=W-W, \qquad G_0:=\bigcup\limits_{n\in \mathbb{N}}nV.$$
Then $G_0$ is an LCA group and a Haar measure on $G_0$ is given by $\lambda_{G_0}:=\lambda_G|_{G_0}$. 
Define the sets $Q^*,Q_0$ as
\begin{align*}
    Q^*:=\left\{ \gamma \in \widehat{G_0}: \text{all the extensions of }\gamma \text{ to }G \text{ lie in }Q  \right\} \\  
    Q_0:=\left\{ \gamma \in \widehat{G_0}: \ \exists \chi \in Q \ \text{such that} \ \chi|_{G_0}=\gamma\right\}.
\end{align*}

\begin{claim}
The set $Q^*\subseteq \widehat{G_0}$ is compact.
\end{claim}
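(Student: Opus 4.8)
The plan is to reduce the statement to standard properties of the restriction homomorphism
\[
r\colon \widehat{G}\longrightarrow\widehat{G_0},\qquad r(\chi):=\chi|_{G_0},
\]
which is the map implicitly present in the definitions of $Q^{*}$ and $Q_0$. Since $G_0$ is an \emph{open} subgroup of $G$ (this is the only feature of $G_0$ that will be used; $\sigma$-compactness is irrelevant here), the quotient $G/G_0$ is discrete, so its dual group $G_0^{\perp}=\{\chi\in\widehat{G}:\chi|_{G_0}\equiv{\bf 1}\}$, which is exactly the kernel of $r$, is compact; moreover Pontryagin duality for closed subgroups identifies $\widehat{G_0}$ topologically with the quotient $\widehat{G}/G_0^{\perp}$ (see, e.g., \cite[Section 2.1]{Rudin} or \cite{HewittRossII}). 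Consequently $r$ is continuous, surjective, and open (the latter because canonical quotient maps of topological groups are open). Here surjectivity of $r$ is precisely the statement that every $\gamma\in\widehat{G_0}$ admits at least one extension to $G$, and for such $\gamma$ the fibre $r^{-1}(\gamma)$ is exactly the set of all extensions of $\gamma$ to $G$.

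Granting this, I would first re-express the two relevant sets. Clearly $Q_0=r(Q)$. Unwinding the quantifier in the definition of $Q^{*}$, a character $\gamma$ fails to lie in $Q^{*}$ if and only if \emph{some} extension of $\gamma$ misses $Q$, that is, if and only if $\gamma\in r\!\left(\widehat{G}\setminus Q\right)$. Hence
\[
Q^{*}=\widehat{G_0}\setminus r\!\left(\widehat{G}\setminus Q\right).
\]

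Now the conclusion follows in two short steps. First, $Q$ is compact, hence closed, so $\widehat{G}\setminus Q$ is open; since $r$ is an open map, $r(\widehat{G}\setminus Q)$ is open, and therefore $Q^{*}$ is closed in $\widehat{G_0}$. Second, surjectivity of $r$ gives $\widehat{G_0}=r(Q)\cup r(\widehat{G}\setminus Q)$, whence $Q^{*}=\widehat{G_0}\setminus r(\widehat{G}\setminus Q)\subseteq r(Q)=Q_0$, and $Q_0$ is compact as the continuous image of the compact set $Q$. Since a closed subset of a compact Hausdorff space is compact, $Q^{*}$ is compact, as claimed. The one ingredient that is not pure point-set topology is the openness and surjectivity of $r$; this is not so much an obstacle as the single place where the duality theory of open subgroups must be invoked with care.
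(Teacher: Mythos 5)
Your proof is correct and follows essentially the same route as the paper: both arguments write $(Q^{*})^{c}$ as the image of $\widehat{G}\setminus Q$ under the restriction map, use that this map is continuous and open because $G_0$ is an open subgroup (the paper cites \cite[(24.5) Lemma]{HewittRossI} where you invoke the identification $\widehat{G_0}\cong\widehat{G}/G_0^{\perp}$), and conclude that $Q^{*}$ is a closed subset of the compact set $Q_0$. Your explicit appeal to surjectivity of the restriction map to justify $Q^{*}\subseteq Q_0$ is a welcome detail that the paper passes over with ``clearly.''
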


Clearly, we have $Q^* \subseteq Q_0$.
The set $Q_0$ is the image of the compact set $Q$ under the restriction map $$
\Phi:\widehat{G} \to \widehat{G_0}, \quad \chi \mapsto\chi|_{G_0}.
$$
Since $G_0$ is open, according to \cite[(24.5) Lemma]{HewittRossI} $\Phi$ is continuous.
So $Q^*$ is compact if and only if it is a closed subset of the compact set $Q_0$.
We can write the complement of $Q^*$ as
\begin{align*}
(Q^*)^c =& \{ \xi \in \widehat{G_0} ~ : ~ \exists \chi \in \widehat{G}, ~ \chi|_{G_0}=\xi, ~ \chi \notin Q \}  \\
=& \bigcup_{\chi \in \widehat{G} \setminus Q} \left\{ \chi|_{G_0} \right\} = \Phi\left( \widehat{G} \setminus Q \right)    
\end{align*} 
where the latter set is open because $\Phi$ is an open mapping, again by \cite[(24.5) Lemma]{HewittRossI}.

\medskip

Similarly to \eqref{G(W,Q)} and \eqref{D(W,Q)}
we consider the function class
$\mathcal{G}(W,Q^*)_{G_0}$
and the extremal quantity
$\mathcal{D}(W,Q^*)_{G_0}$.

\begin{claim}
We have $h^0\in \mathcal{G}(W,Q^*)_{G_0}$ if and only if for its extension ${h}$ we have ${h}\in \mathcal{G}(W,Q)_G$.   
\end{claim}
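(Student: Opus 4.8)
The plan is to prove the equivalence of the two membership conditions by carefully tracking how each defining property of the function class behaves under the trivial extension $h^0 \mapsto h$ given by Lemma~\ref{L:extension}. The properties to check are: continuity and positive definiteness; membership in $L^1$; the normalization $h(0)=1$; the support condition on the positive part; and the support condition on the Fourier transform. The first group is immediate: Lemma~\ref{L:extension} says precisely that the trivial extension of a continuous positive definite function on the closed subgroup $G_0$ is continuous and positive definite on $G$, and conversely the restriction to $G_0$ of a continuous positive definite function is continuous and positive definite. Since $\lambda_{G_0} = \lambda_G|_{G_0}$ and $h$ vanishes off $G_0$, we have $\int_G |h|\, d\lambda_G = \int_{G_0} |h^0|\, d\lambda_{G_0}$, so $h \in L^1(G)$ iff $h^0 \in L^1(G_0)$; likewise $h(0) = h^0(0)$, so the normalization transfers trivially. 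For the positive part, $\supp h_+ \subseteq G_0$ automatically (as $h$ is $0$ outside $G_0$), and $W \subseteq G_0$ by construction, so $\supp h^0_+ \subseteq W$ in $G_0$ iff $\supp h_+ \subseteq W$ in $G$.

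The substantive step is matching the two Fourier-support conditions, and here the chosen definition of $Q^*$ is exactly what makes it work. The key identity is the relationship between $\widehat{h^0}$ on $\widehat{G_0}$ and $\widehat{h}$ on $\widehat{G}$: since $h$ is supported on $G_0$ and $\lambda_{G_0} = \lambda_G|_{G_0}$, for any $\chi \in \widehat{G}$ we have $\widehat{h}(\chi) = \int_{G_0} h^0(g)\overline{\chi(g)}\, d\lambda_{G_0}(g) = \widehat{h^0}(\chi|_{G_0}) = \widehat{h^0}(\Phi(\chi))$. In other words $\widehat{h} = \widehat{h^0} \circ \Phi$, so $\widehat{h}$ is constant on each fiber $\Phi^{-1}(\gamma)$ of the restriction map, equal there to $\widehat{h^0}(\gamma)$. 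Now I would argue both directions. If $h^0 \in \mathcal{G}(W,Q^*)_{G_0}$, suppose $\chi \in \widehat{G}$ with $\widehat{h}(\chi) \neq 0$; then $\widehat{h^0}(\Phi(\chi)) \neq 0$, so $\Phi(\chi) \in \supp\widehat{h^0} \subseteq Q^*$, which by definition of $Q^*$ means \emph{every} extension of $\Phi(\chi)$ to $G$ lies in $Q$ — in particular $\chi$ itself does. Since $Q$ is closed (it is compact, hence closed in the Hausdorff group $\widehat{G}$), taking closures gives $\supp\widehat{h} \subseteq Q$. Conversely, if $h \in \mathcal{G}(W,Q)_G$, suppose $\gamma \in \widehat{G_0}$ with $\widehat{h^0}(\gamma) \neq 0$; pick any extension $\chi \in \widehat{G}$ of $\gamma$ (such an extension exists because $G_0$ is a subgroup of the LCA group $G$, by \cite[(24.12) Theorem]{HewittRossI} or the standard Pontryagin duality extension property), then $\widehat{h}(\chi) = \widehat{h^0}(\gamma) \neq 0$, so $\chi \in Q$; moreover, since $\widehat{h} = \widehat{h^0}\circ\Phi$ depends only on the restriction, \emph{every} extension $\chi'$ of $\gamma$ has $\widehat{h}(\chi') = \widehat{h^0}(\gamma) \neq 0$, hence $\chi' \in Q$, so $\gamma \in Q^*$. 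Again using that $Q^*$ is closed (the previous Claim) and passing to closures yields $\supp\widehat{h^0} \subseteq Q^*$.

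I would expect the main obstacle to be the care needed with supports versus pointwise nonvanishing sets: one cannot simply say "$\widehat{h}(\chi) \neq 0 \Rightarrow \chi \in Q$" and conclude $\supp\widehat{h} \subseteq Q$ directly, because the support is the closure of the nonvanishing set, so the argument genuinely uses that $Q$ and $Q^*$ are closed (which is why the preceding Claim about compactness of $Q^*$ is placed where it is). A secondary point worth a sentence is the invocation of the Inversion Theorem or Plancherel setup to guarantee that $\widehat{h^0}$ exists and is continuous so that "$\supp$" is meaningful — but this comes for free once we know $h^0 \in \mathcal{P}(G_0)\cap L^1(G_0)$, exactly as in the proof of Lemma~\ref{L:fn_conv}. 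Everything else is bookkeeping, and the identity $\widehat{h} = \widehat{h^0}\circ\Phi$ together with the tailored definition of $Q^*$ does all the real work.
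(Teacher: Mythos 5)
Your proof is correct and takes essentially the same approach as the paper: both rest on the identity $\widehat{h}(\chi)=\widehat{h^0}\left(\chi|_{G_0}\right)$ for $h$ supported in $G_0$, combined with the definition of $Q^*$, to transfer the Fourier-support condition in each direction, with the remaining defining properties passing trivially through the trivial extension. You are in fact slightly more explicit than the paper on two points it leaves implicit, namely the verification of the non-Fourier properties and the passage from the non-vanishing set to its closure (which uses that $Q$ and $Q^*$ are closed).
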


First assume that $h\in \mathcal{G}(W,Q)_G$. Since further properties of $h^0:=h|_{G_0}$ are inherited to that of $h$, we intend to show that $\supp \widehat{h^0} \subseteq Q^*$.
Choose a $\gamma \in \widehat{G_0}$ for which $\widehat{h^0}(\gamma)\neq 0$.
Let $\chi\in \widehat{G}$ be any extension of $\gamma$ such that $\chi|_{G_0}=\gamma$.
As $h\in L^1(G)$ with $\supp{h} \subseteq G_0$, there holds the computation
\begin{equation} \label{fouriertr}
    \widehat{h^0}(\gamma)=\int_{G_0} h^0(g)\overline{\gamma(g)} d\lambda_{G_0}(g)=
    \int_G h(g) \overline{\chi(g)} d\lambda_G(g)=\widehat{h}(\chi),
\end{equation}
whence $\widehat{h}(\chi)\neq 0$ holds.
It follows that every extension $\chi \in \widehat{G}$ of $\gamma$ lies in $Q$, in other words $\gamma \in Q^*$. Thus, $\supp \widehat{h^0} \subseteq Q^*$, as wanted.


To see the converse, again from the computation \eqref{fouriertr} we see that $\widehat{h}(\chi)\neq 0$ implies $\widehat{h^0}(\gamma)\neq 0$ whenever $\gamma$ is the restriction of $\chi$. By assumption and the definition of the set $Q^*$, the character $\chi$ lies in $Q$. So $\supp\widehat{h}\subseteq Q$, and thus $h\in \mathcal{G}(W,Q)_G$.  

\begin{claim}
We have $\mathcal{D}(W,Q)_G = \mathcal{D}(W,Q^*)_{G_0}$.
\end{claim}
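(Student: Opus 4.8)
The plan is to prove the two inequalities $\mathcal D(W,Q^*)_{G_0}\le\mathcal D(W,Q)_G$ and $\mathcal D(W,Q)_G\le\mathcal D(W,Q^*)_{G_0}$ separately. The first one follows at once from the preceding Claim: for every $h^0\in\mathcal G(W,Q^*)_{G_0}$ its trivial extension $h$ lies in $\mathcal G(W,Q)_G$, and since $h$ vanishes outside $G_0$ while $\lambda_{G_0}=\lambda_G|_{G_0}$, one has $\int_G h\,d\lambda_G=\int_{G_0}h^0\,d\lambda_{G_0}$; passing to the supremum over all such $h^0$ gives $\mathcal D(W,Q^*)_{G_0}\le\mathcal D(W,Q)_G$.

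For the reverse inequality I would fix an arbitrary $f\in\mathcal G(W,Q)_G$ and pass to the restriction $f^0:=f|_{G_0}$. Restrictions of continuous positive definite functions are positive definite, and clearly $f^0\in L^1(G_0)$, $f^0(0)=1$ and $\supp f^0_+\subseteq W$ (recall $W\subseteq G_0$), so the only nontrivial point is to show $\supp\widehat{f^0}\subseteq Q^*$; once this is known, $f^0\in\mathcal G(W,Q^*)_{G_0}$. Granting it, the sign condition $\supp f_+\subseteq W\subseteq G_0$ forces $f\le0$ on $G\setminus G_0$, whence $\int_G f\,d\lambda_G\le\int_{G_0}f^0\,d\lambda_{G_0}\le\mathcal D(W,Q^*)_{G_0}$; taking the supremum over $f$ then completes the proof.

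It remains to verify $\supp\widehat{f^0}\subseteq Q^*$. Since $G_0$ is open, $(G_0)^\perp$ is compact; writing $\Phi\colon\widehat G\to\widehat{G_0}$ for the (open, continuous) restriction map, whose fibres are the cosets of $(G_0)^\perp$, the Weil formula identifies $\widehat{f^0}$, up to a positive normalizing constant, with the push-forward of $\widehat f\,d\lambda_{\widehat G}$ under $\Phi$; equivalently $\widehat{f^0}(\gamma)=\int_{(G_0)^\perp}\widehat f(\chi_\gamma\omega)\,d\lambda_{(G_0)^\perp}(\omega)$ for any extension $\chi_\gamma$ of $\gamma$. Consequently $\supp\widehat{f^0}=\overline{\Phi(\{\widehat f>0\})}$, and $\widehat{f^0}(\gamma)\ne0$ only if $\widehat f$ is positive somewhere on the entire fibre $\chi_\gamma(G_0)^\perp$. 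The task is to upgrade this to $\chi_\gamma(G_0)^\perp\subseteq Q$, i.e. $\gamma\in Q^*$.

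Here the sign condition enters decisively. Periodising $f$ over $G_0$ yields a positive definite function on the discrete group $G/G_0$ that is $\le 0$ away from the neutral coset (because $f\le0$ on $G\setminus G_0$), and via the duality $\widehat{G/G_0}\cong(G_0)^\perp$ this should force the nonnegative continuous function $\omega\mapsto\widehat f(\chi_\gamma\omega)$ on the compact fibre to not vanish on any nonempty open set as soon as it is positive somewhere (a nonnegative character sum on a compact abelian group with nonpositive off‑diagonal coefficients attaining its maximum on an open set must be constant). Thus $\{\widehat f>0\}\cap\chi_\gamma(G_0)^\perp$ would be dense in $\chi_\gamma(G_0)^\perp$, so $\chi_\gamma(G_0)^\perp\subseteq\overline{\{\widehat f>0\}}=\supp\widehat f\subseteq Q$, as required. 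I expect this last point — excluding that a ``slice'' of $\widehat f$ along a compact fibre vanishes on an open piece while being positive elsewhere — to be the main obstacle: it is precisely where the constraint $\supp f_+\subseteq W$ (rather than mere band‑limitedness) has to be exploited, and it takes the place of the modulus‑of‑continuity and Bessel‑function estimates available only for $G=\mathbb R^d$.
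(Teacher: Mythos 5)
Your first direction, $\mathcal{D}(W,Q^*)_{G_0}\le\mathcal{D}(W,Q)_G$ via trivial extension, is correct and coincides with what the paper does. The gap is in the reverse direction, precisely at the step you yourself flag as the main obstacle: proving $\supp\widehat{f^0}\subseteq Q^*$ for the restriction $f^0=f|_{G_0}$ of a \emph{general} $f\in\mathcal{G}(W,Q)_G$. Your Weil-formula reduction is sound ($\widehat{f^0}(\gamma)$ is, up to a positive constant, the integral of $\widehat f\ge 0$ over the fibre $\chi_\gamma (G_0)^\perp$, so $\widehat{f^0}(\gamma)\ne 0$ only tells you that $\widehat f>0$ \emph{somewhere} on the fibre), but the principle you invoke to upgrade ``somewhere'' to ``densely'' is neither proved nor true as stated. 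Two concrete objections. First, the twisted periodization $gG_0\mapsto\int_{G_0}f(g+t)\,\overline{\chi_\gamma(g+t)}\,d\lambda_{G_0}(t)$, whose Fourier transform on $(G_0)^\perp\cong\widehat{G/G_0}$ is the slice $\omega\mapsto\widehat f(\chi_\gamma\omega)$, is complex-valued off the neutral coset, so the hypothesis ``nonpositive off-diagonal coefficients'' is not available. Second, the conclusion itself fails: take $G=\mathbb{Z}$, $G_0=2\mathbb{Z}$, let $u\ge 0$ be a continuous bump on $\mathbb{T}$ supported in a small arc around $1$, and set $F(z):=(u\star\widetilde u)(-z)$, $f(n):=\widehat{F}(n)=(-1)^n c\,|\widehat u(n)|^2$. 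Then $f\in\ell^1(\mathbb{Z})$, $f\gg 0$, $f\le 0$ off $G_0$, and $\widehat f=F$ is supported in a small arc around $-1$; the fibre $\{z,-z\}$ of the restriction map $z\mapsto z^2$ over $z^2=1$ meets $\{\widehat f>0\}$ only at $-1$, yet $\widehat{f|_{G_0}}(1)=\tfrac12\bigl(F(1)+F(-1)\bigr)>0$. So the slice vanishes on a relatively open half of the fibre while being positive on the other half. This $f$ has $\supp f_+$ of infinite counting measure, so it does not refute the Claim itself, but it shows your mechanism cannot work unless the finiteness of $\lambda_G(W)$ enters in an essential way that your sketch never uses.

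For comparison, the paper's route for this direction is formally the same as yours: restrict an extremal sequence $(f_n)$ and assert $f_n|_{G_0}\in\mathcal{G}(W,Q^*)_{G_0}$. The paper justifies this membership by appeal to the preceding Claim, but the proof of that Claim derives the key identity $\widehat{h^0}(\gamma)=\widehat h(\chi)$ (valid for \emph{every} extension $\chi$ of $\gamma$) under the hypothesis $\supp h\subseteq G_0$, which a general member of the extremal sequence need not satisfy. So you have correctly isolated the genuine difficulty --- it is not automatic that restricting a $Q$-band-limited function to $G_0$ produces a $Q^*$-band-limited one when $f$ does not vanish off $G_0$ --- but your proposed resolution does not close it, and as written your argument for $\mathcal{D}(W,Q)_G\le\mathcal{D}(W,Q^*)_{G_0}$ is incomplete.
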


The inequality $\mathcal{D}(W,Q)_G \leq \mathcal{D}(W,Q^*)_{G_0}$ is in fact easy to verify. Indeed, by the $\mathcal{D}(W,Q)_{G}$-extremality of the sequence $(f_n)$ and $f_n^0:=f_n|_{G_0} \in \mathcal{G}(W,Q^*)_{G_0}$ for every $n\in \mathbb{N}$, 
we get
\[
\mathcal{D}(W,Q^*)_{G_0} \geq \int_{G_0} f_n^0 d\lambda_{G_0} = \int_G f_n d\lambda_G > \mathcal{D}(W,Q)_{G} - \frac{1}{n}
\]
for every $n\in \mathbb{N}^{+}$, as wanted.

To see the converse, consider an extremal sequence $(f_n^0) \subseteq  \mathcal{G}(W,Q^*)_{G_0}$ on $G_0$ and extend it in the trivial way
to a sequence $(\widetilde{f_n})$ on $G$.
Then in virtue of \eqref{fouriertr} $\widehat{\widetilde{f_n}}(\chi)\neq 0$ implies $\widehat{f_n^0}(\gamma)\neq 0$ where $\gamma=\chi|_{G_0}$. 
The latter condition gives us that $\gamma \in Q^*$, so it follows directly that $\supp \widehat{\widetilde{f_n}} \subseteq Q$.
Hence $\widetilde{f_n} \in \mathcal{G}(W,Q)_{G}$ and thus
\[
\mathcal{D}(W,Q)_G \geq \int_G \widetilde{f_n} d\lambda_G = \int_{G_0} f_n^0 d\lambda_{G_0}> \mathcal{D}(W,Q^*)_{G_0}-\frac{1}{n}
\]
for every integer $n\geq 1$ which implies $\mathcal{D}(W,Q)_G\geq \mathcal{D}(W,Q^*)_{G_0}$. 

\medskip

Now we can finish the proof of Theorem~\ref{T:main} quite easily.
To do so, consider a $\mathcal{D}(W,Q^*)_{G_0}$-extremal sequence $(f_n^0)$ on $G_0$.
Then according to Lemma~\ref{L:fn_conv} there is a subsequence of $(f_n^0)$ which tends to a function $f_0 \in \mathcal{G}(W,Q^*)_{G_0}$, and we have
\[
\int_{G_0} f_0  d\lambda_{G_0} = \mathcal{D}(W,Q^*)_{G_0}= \mathcal{D}(W,Q)_G.
\]
For the trivial extension $\widetilde{f}$ of $f_0$, one has
\[
\int_{G} \widetilde{f}  d\lambda_{G} = \int_{G_0} f_0  d\lambda_{G_0} = \mathcal{D}(W,Q)_G.
\]
Since $\widetilde{f} \in \mathcal{G}(W,Q)_G$, the last displayed equality shows that the function $\widetilde{f}$ is $\mathcal{D}(W,Q)_G$-extremal.  
\end{proof}

\begin{Remark}
It is apparent from the construction presented in the last part of the proof of Theorem~\ref{T:main} that the extremal function can be chosen to be supported in the open $\sigma$-compact subgroup of $G$ generated by $W$. 
\end{Remark}

\bigskip

\textbf{Acknowledgement.}
This research was partially supported by the DAAD-Tempus PPP Grant 57448965 titled ,,Harmonic Analysis and Extremal Problems''.

Ga\'al was supported by the National Research, Development and Innovation Office -- NKFIH Reg. No.'s K-115383 and K-128972, and also by the Ministry for Innovation and Technology, Hungary throughout Grant TUDFO/47138-1/2019-ITM.

Zsuzsanna Nagy-Csiha was supported by the ÚNKP-19-3 New National Excellence Program of the Ministry for Innovation and Technology.

The authors gratefully acknowledge their sincere thank to Szil\'ard Gy. R\'ev\'esz for great discussions and encouragement.
They also thank to Elena Berdysheva for several useful comments and suggestions, and for the reference \cite{CG}. The help of Dávid Kunszenti-Kovács, who gave a comment on the earlier version of the paper, is also acknowledged.  

\bibliographystyle{amsplain}

\end{document}